\title[Lagrangian self-shrinkers with reflection symmetry]{Closed lagrangian self-shrinkers in $\R^4$ symmetric with respect to a hyperplane}
\author[J. Lee]{Jaehoon Lee}
\address[]{Jaehoon Lee, Department of Mathematical Sciences, Seoul National University, Seoul  08826, Korea}
\email{jaehoon.lee@snu.ac.kr}
\begin{document}

\newtheorem{theorem}{theorem}[section]
\newtheorem{thm}[theorem]{Theorem}
\newtheorem{lem}[theorem]{Lemma}
\newtheorem{cor}[theorem]{Corollary}
\newtheorem{prop}[theorem]{Proposition}
\newtheorem{rmk}[theorem]{Remark}
\newtheorem{Def}[theorem]{Definition}
\newtheorem{Question}[theorem]{Question}

\renewcommand{\theequation}{\thesection.\arabic{equation}}
\newcommand{\RNum}[1]{\uppercase\expandafter{\romannumeral #1\relax}}
\newcommand{\R}{\mathbb{R}}
\newcommand{\C}{\mathbb{C}}
\newcommand{\grad}{\nabla}
\newcommand{\laplacian}{\Delta}
\renewcommand{\d}{\textup{d}}

\subjclass[2020]{Primary 53C42; Secondary 53D12}
\keywords{Lagrangian self-shrinker, Clifford torus, Reflection symmetry, Lawson conjecture, Rigidity}

\begin{abstract}
In this paper, we prove that the closed Lagrangian self-shrinkers in $\R^4$ which are symmetric with respect to a hyperplane are given by the products of Abresch-Langer curves. As a corollary, we obtain a new geometric characterization of the Clifford torus as the unique embedded closed Lagrangian self-shrinker symmetric with respect to a hyperplane in $\R^4$. 
\end{abstract}

\maketitle

\section{Introduction}
\setcounter{equation}{0}
A \emph{self-shrinker} of the mean curvature flow is defined to be an immersed submanifold in the Euclidean space, $F : M^n \to \R^{n+k}$, which satisfies the quasilinear elliptic system
	\begin{align}\label{SS}
	\vec{H}=-F^{\perp}.
	\end{align}
Here, $\vec{H}$ is the mean curvature vector given by the trace of the second fundamental form and $\perp$ means the projection onto the normal bundle of $M^n$.

It is well known that the blow-up limit at the Type \RNum{1} singularity of the mean curvature flow is a self-shrinker. Self-shrinkers themselves also provide self-similar solutions of the mean curvature flow that shrink to the origin. Furthermore, if we consider the Euclidean space as a weighted Riemannian manifold with the Gaussian density $e^{-\frac{|x|^2}{2}}$, the solution of (\ref{SS}) corresponds to a minimal submanifold in a weighted sense. For these reasons, it is important to understand the geometry of self-shrinkers.

Abresch and Langer \cite{AL} completely determined closed self-shrinking planar curves. These curves are called Abresch-Langer curves and note that they have reflection symmetry. Moreover, there is a positive constant associated with each curve so that if two Abresch-Langer curves have the same constants, then they are the same up to a rigid motion (see Lemma \ref{TR}). However, unlike the case of curves, the situation becomes more complicated, so it is hard to expect a complete classification in higher dimensions.

On the other hand, the mean curvature flow preserves the Lagrangian condition in K\"{a}hler-Einstein manifolds, including the Euclidean space (see \cite{SmoLag}). Since the mean curvature flow corresponds to the negative gradient flow for a volume functional, it provides a potential way to obtain a volume minimizer. For instance, special Lagrangian submanifolds are volume minimizing in a Calabi-Yau manifold, so the Lagrangian mean curvature flow plays a significant role in the construction of these examples. However, the development of finite-time singularities is the main difficulty of this method (see for instance \cite{Neves}). Therefore, in order to manage possible singularities, it is necessary to study Lagrangian self-shrinkers.

In this paper, we consider Lagrangian self-shrinkers in $\R^4 \simeq \C^2$. Since Lagrangian self-shrinking sphere cannot exist by the theorem of Smoczyk \cite{SmoSph}, the simplest example is the surface of genus 1. Many immersed Lagrangian self-shrinking tori were constructed: products of Abresch-Langer curves, Anciaux's tori (see \cite{Anciaux}), and Lee-Wang's tori (see \cite{LWG}, \cite{LWF}).

%One interesting observation we would like to emphasize is that the Clifford torus is the only embedded example of all known closed Lagrangian self-shrinkers in $\C^2$.   
%One interesting observation we would like to emphasize is that the Clifford torus is the only embedded example of all the closed Lagrangian self-shrinkers in $\C^2$ known from the literature. 
One interesting observation we would like to emphasize is that all known embedded examples of closed Lagrangian self-shrinkers in $\C^2$ become the Clifford torus. This is not true in $\C^n$($n \geq 3$) since two or more different embedded Lagrangian self-shrinkers can be found in Anciaux's examples (see \cite{Anciaux}). Moreover, as the normal and tangent bundles of a Lagrangian submanifold are diffeomorphic and the self-intersection number is given by the Euler characteristic of the normal bundle, one can conclude that embedded Lagrangian surfaces in $\C^2$ should have genus 1. Therefore, it is natural to ask whether the Clifford torus is the only embedded example in $\C^2$:
	\begin{Question}\label{conj}
	Is the Clifford torus unique as an embedded Lagrangian self-shrinker in $\C^2$?
	\end{Question}

This question is analogous to Lawson's conjecture that whether the Clifford torus is the only embedded minimal torus in $S^3$. The Lawson conjecture was proved by Brendle \cite{Brendle}, by clever use of the maximum principle for two-point functions on a given surface. However, due to the increased codimension, it is not easy to apply the maximum principle in our case. But, assuming a particular symmetry condition, we obtain a positive answer for Question \ref{conj}.

Before we state the main results, we recall some rigidity results on the Clifford torus as a Lagrangian self-shrinker. Castro and Lerma \cite{CLHminimal} characterized Lee-Wang's tori as compact Hamiltonian stationary Lagrangian self-shrinkers in $\C^2$ and the Clifford torus was characterized as the only embedded example among them. In \cite{CLLag}, they proved that if a compact Lagrangian self-shrinker satisfies either $|\vec{H}|^2=const$ or $|\vec{H}|^2\leq 2$ or $|\vec{H}|^2\geq 2$, then it is the Clifford torus. They also proved that a compact Lagrangian self-shrinker without change of sign on the Gauss curvature and satisfying $|\sigma|^2\leq2$ is the Clifford torus. Here, $\sigma$ denotes the second fundamental form. Then Li and Wang \cite{LiWang} generalized previous results into two directions: the Clifford torus is the unique compact Lagrangian self-shrinker with $|\sigma|^2\leq 2$, and if a compact Lagrangian self-shrinker has no sign change on the Gauss curvature, then it is one of the products of Abresch-Langer curves. 

Now we recall the result of Ros related to Lawson's conjecture, which states that the Clifford torus is the unique embedded minimal torus in $S^3$ symmetric with respect to four pairwise orthogonal hyperplanes in $\R^4$ (see Theorem 6 in \cite{Ros}). Since the almost complex structure does not commute with reflections in general, we could expect that there would be a non-trivial restriction on a Lagrangian submanifold if we assume the reflection symmetry. Motivated by this, we study Lagrangian self-shrinkers symmetric with respect to a hyperplane in $\R^4$. 

Then we have the following theorem.

\begin{thm}\label{MTHM}
Let $F : \Sigma^2 \to \R^4\simeq \C^2$ be a closed Lagrangian self-shrinker symmetric with respect to a hyperplane $P$. Then $\Sigma^2$ is given by the product of two Abresch-Langer curves.
\end{thm}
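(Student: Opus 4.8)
The plan is to exploit the tension between the reflection symmetry and the complex structure $J$ of $\C^2$ to force $\Sigma$ to split as a metric product of two planar curves, and then to recognize each factor as a closed planar self-shrinker. First I would normalize the hyperplane. Writing the reflection across $P$ as $R(x)=Sx+2c\nu$, where $\nu$ is a unit normal to $P$ and $S$ is the linear reflection across $\nu^{\perp}$, the self-shrinker equation $\vec{H}=-F^{\perp}$ together with $R(\Sigma)=\Sigma$ forces $c\,\nu^{\perp}=0$ along $\Sigma$; since $\Sigma$ is compact it cannot be invariant under a translation, so $c=0$ and $P$ passes through the origin. Thus $R=S$ is an orthogonal involution. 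Decomposing $\C^2=L_1\oplus W$ with $L_1=\mathrm{span}_{\R}\{\nu,J\nu\}$ and $W=L_1^{\perp}$, both of which are complex lines (i.e.\ $J$-invariant, using $\langle\nu,J\nu\rangle=0$), one checks that $R$ acts as complex conjugation on $L_1$ and as the identity on $W$. In particular $R$ does not commute with $J$, as anticipated.

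The key step is to produce a second compatible complex structure. Set $\tilde J=RJR$; since $R$ is an orthogonal involution, $\tilde J$ is again an orthogonal complex structure, and a direct check gives $\tilde J=-J$ on $L_1$ and $\tilde J=J$ on $W$. Because $\Sigma$ is Lagrangian for $J$ and invariant under $R$, for $X,Y\in T_p\Sigma$ one has $RX\in T_{Rp}\Sigma$, hence $JRX\perp T_{Rp}\Sigma$ and $RJRX\perp T_p\Sigma$; that is, $\Sigma$ is Lagrangian for $\tilde J$ as well. Consequently the orthogonal involution $A:=-J\tilde J$, which equals $-\mathrm{Id}$ on $L_1$ and $+\mathrm{Id}$ on $W$, preserves $T_p\Sigma$ at every point, so $T_p\Sigma=(T_p\Sigma\cap L_1)\oplus(T_p\Sigma\cap W)$. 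The two degenerate cases $T_p\Sigma=L_1$ or $T_p\Sigma=W$ cannot occur, since $L_1$ and $W$ are complex lines while $T_p\Sigma$ is Lagrangian. Hence the splitting is of type $(1,1)$ at every point: $\ell_1(p):=T_p\Sigma\cap L_1$ and $\ell_2(p):=T_p\Sigma\cap W$ are smooth, transverse line fields with $T\Sigma=\ell_1\oplus\ell_2$.

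Next I would upgrade this pointwise splitting to a local product. Choosing local coordinates $(s,t)$ adapted to the two line fields, so that $\partial_s F\in L_1$ and $\partial_t F\in W$, the mixed partial $\partial_s\partial_t F=\partial_t\partial_s F$ lies in $L_1\cap W=\{0\}$, whence $F(s,t)=a(s)+b(t)$ with $a$ valued in $L_1$ and $b$ valued in $W$. Since $L_1\perp W$, the induced metric is $|a'|^2\,ds^2+|b'|^2\,dt^2$, a genuine Riemannian product; therefore the second fundamental form and the mean curvature of $\Sigma$ split along $L_1\oplus W$. Writing $\vec{H}=\vec{H}_1+\vec{H}_2$ and $F^{\perp}=F_1^{\perp}+F_2^{\perp}$ accordingly (note $L_1,W$ are $J$-invariant, so the decomposition $\C^2=L_1\oplus W$ refines $T\Sigma\oplus N\Sigma$), the equation $\vec{H}=-F^{\perp}$ decouples into $\vec{H}_{\gamma_1}=-a^{\perp}$ in $L_1$ and $\vec{H}_{\gamma_2}=-b^{\perp}$ in $W$. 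Thus each factor is a planar self-shrinking curve.

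Finally I would globalize. The maps $\pi_1\circ F$ and $\pi_2\circ F$, the projections of $F$ onto $L_1$ and $W$, have constant rank $1$, so their images $\gamma_1\subset L_1$ and $\gamma_2\subset W$ are immersed closed curves (compact and connected), and $G=(\pi_1\circ F,\pi_2\circ F)$ is a local diffeomorphism, hence a covering, onto the product $\gamma_1\times\gamma_2$; since $F=\pi_1 F+\pi_2 F$, this identifies $\Sigma$ with the product. Being closed planar self-shrinkers, $\gamma_1$ and $\gamma_2$ are Abresch-Langer curves by the Abresch-Langer classification, which proves the theorem. I expect the main subtlety to lie not in the algebraic splitting, which is forced cleanly by the Lagrangian condition, but in this last globalization step: verifying that the coordinates adapted to the two line fields exist coherently enough for the local decomposition $F=a(s)+b(t)$ to assemble into honest closed factor curves, and that the resulting map onto $\gamma_1\times\gamma_2$ is controlled, for which compactness of $\Sigma$ and the a priori closedness of the factors are essential.
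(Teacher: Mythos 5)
Your proposal takes a genuinely different route from the paper, and its core argument is correct and more conceptual than the paper's. The paper works in isothermal coordinates, writes $F=(A,B)$, derives local equations for $A$ and $B$ (Proposition \ref{PLE}), and then proves a global conservation-law identity $r_i^4|\grad\theta_i|^2=C_ie^{r_i^2}$ analogous to the Abresch--Langer transcendental relation (Proposition \ref{GC}, via a delicate open-closed argument and the positivity Lemma \ref{CP}); only then does it get orthogonality of $\grad\theta_1$ and $\grad\theta_2$, the local product structure, and flatness. Your argument replaces Sections 3 and 4 wholesale: the observation that $\Sigma$ is Lagrangian for both $J$ and $\tilde J=RJR$, so that every tangent plane is invariant under the orthogonal involution $-J\tilde J$ (which is $-\mathrm{Id}$ on $L_1$ and $+\mathrm{Id}$ on $W$), forces the eigen-splitting $T\Sigma=(T\Sigma\cap L_1)\oplus(T\Sigma\cap W)$ at every point, with the degenerate cases excluded exactly because $L_1,W$ are complex lines while $T_p\Sigma$ is Lagrangian. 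This is correct (the identity $\tilde J T_p\Sigma = N_p\Sigma = JT_p\Sigma$ is precisely what makes $-J\tilde J$ preserve $T_p\Sigma$), and the passage to adapted coordinates with $\partial_s\partial_t F\in L_1\cap W=\{0\}$, hence a local product and a flat product metric, is also sound. What the paper's longer route buys in exchange is the explicit global relation of Proposition \ref{GC} and the identification of its constants with the Abresch--Langer constants $c_\Gamma^2$, which have independent interest.

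The one genuine gap is your final globalization, and you correctly sense it is the weak point. The assertion that the images of the constant-rank-$1$ maps $\pi_1\circ F$, $\pi_2\circ F$ are immersed closed curves is not automatic: a priori the integral leaves of the line fields need not close up (think of an irrational foliation of a torus), and constant rank alone does not make the image a $1$-manifold. Worse, when the factor curves are non-embedded --- which is the case for every Abresch--Langer curve except the circle --- the set $\gamma_1\times\gamma_2$ is not a submanifold of $\R^4$, so the statement that $G$ is ``a local diffeomorphism, hence a covering, onto the product'' does not parse; the correct conclusion must be phrased at the level of immersions, $F=(c_1\times c_2)\circ\Phi$ for closed curves $c_i:S^1\to\R^2$. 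The cheapest repair is available to you already: by the point where you observe that the induced metric is $|a'|^2\,\d s^2+|b'|^2\,\d t^2$, you have shown $\Sigma$ is flat, so you can invoke Proposition \ref{Flat} (Li--Wang, Proposition 5.6 of \cite{LiWang}) and stop --- this is exactly how the paper concludes. Alternatively, to keep your self-contained ending, you would need to argue that each leaf of $\ell_1$, being a complete planar self-shrinking curve whose image is bounded (since $\Sigma$ is compact), must be a closed Abresch--Langer curve rather than a line through the origin, and only then run the covering argument on the domain $S^1\times S^1$ rather than on the image.
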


We remark that it requires only one hyperplane of symmetry in Theorem \ref{MTHM}, while Ros' theorem assumed four orthogonal hyperplanes of symmetry. For the proof of Theorem \ref{MTHM}, we were not able to apply the method of Ros \cite{Ros} due to the high codimension. Instead, we observe that each Lagrangian self-shrinker with reflection symmetry satisfies global relations which are similar to transcendental relations (\ref{treq}) on Abresch-Langer curves (see Proposition \ref{GC}). By using such relations, we prove that $\Sigma^2$ is flat. Then, by the result of Li and Wang (see Proposition \ref{Flat}), we could finish the proof. We also show that global relations on a product of Abresch-Langer curves coincide with the transcendental relation on each curve.

Since the Clifford torus is the only embedded self-shrinker among the products of Abresch-Langer curves, we obtain a new geometric characterization of the Clifford torus as a Lagrangian self-shrinker:

\begin{thm}\label{COR}
A closed embedded Lagrangian self-shrinker symmetric with respect to a hyperplane in $\R^4$ is the Clifford torus.
\end{thm}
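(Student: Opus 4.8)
The plan is to derive Theorem \ref{COR} as a direct consequence of Theorem \ref{MTHM} together with the classification of embedded Abresch-Langer curves. First I would invoke Theorem \ref{MTHM}: any closed Lagrangian self-shrinker $F : \Sigma^2 \to \R^4 \simeq \C^2$ symmetric with respect to a hyperplane is, up to a rigid motion, a product $\gamma_1 \times \gamma_2$ of two Abresch-Langer curves, where $\R^4 \simeq \R^2 \times \R^2$ is the associated orthogonal splitting of the Euclidean space. The problem then reduces entirely to determining which products of Abresch-Langer curves are embedded.

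The key step is the observation that the product immersion $\gamma_1 \times \gamma_2$ is embedded if and only if each factor $\gamma_i$ is embedded. Indeed, if $\gamma_1$ had a self-intersection, say $\gamma_1(s) = \gamma_1(s')$ for parameters $s \neq s'$, then for any fixed value $t$ the distinct points $(s,t)$ and $(s',t)$ would be sent to the same point of $\R^4$, so the product could not be embedded; the symmetric argument applies to $\gamma_2$. Conversely, the product of two embedded circles is a smoothly embedded torus.

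Next I would appeal to the classification of Abresch and Langer \cite{AL}: among all closed self-shrinking planar curves, the round circle is the unique embedded one, every other Abresch-Langer curve being an immersed closed curve of winding number greater than one and hence possessing self-intersections. Combined with the previous step, this forces both $\gamma_1$ and $\gamma_2$ to be round circles. Restricting the self-shrinker equation (\ref{SS}) to a circle of radius $r$ centered at the origin gives $1/r = r$, since the curvature vector has magnitude $1/r$ while the (purely normal) position vector has magnitude $r$; thus each factor is the circle of radius one, and $\Sigma^2 = \gamma_1 \times \gamma_2$ is precisely the Clifford torus.

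I do not anticipate a serious obstacle, as the substantive analytic work has already been carried out in the proof of Theorem \ref{MTHM}; what remains is the elementary embeddedness argument together with the input from the Abresch-Langer classification. The only point requiring mild care is to verify that the splitting furnished by Theorem \ref{MTHM} is a genuine orthogonal product decomposition of $\R^4$ compatible with the self-shrinker structure, so that embeddedness of $\Sigma^2$ in $\R^4$ is indeed equivalent to embeddedness of the two planar factors.
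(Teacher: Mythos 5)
Your proposal is correct and follows essentially the same route as the paper, which derives Theorem \ref{COR} directly from Theorem \ref{MTHM} together with the fact that the unit circle is the only embedded curve among the Abresch-Langer examples. You merely make explicit two points the paper leaves implicit --- that a product immersion is embedded if and only if both factors are embedded, and that the self-shrinker equation forces the circle factors to have radius one --- both of which are verified correctly.
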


The paper is organized as follows. In Section 2 we provide some basic definitions and show that a hyperplane of symmetry can be assumed to be the coordinate hyperplane $\{x_1=0\}$ in a way that preserves the Lagrangian condition. We also recall two previous results that play an important role in this paper. In the next section, we compute local equations for Lagrangian self-shrinkers with reflection symmetry in terms of isothermal coordinates. Then we obtain global relations in Section 4. Finally, we prove the main results in Section 5.

\section*{Acknowledgements}
The author would like to express his gratitude to Jaigyoung Choe for helpful discussions and thoughtful encouragement. This work was supported in part by NRF-2018R1A2B6004262.

\section{Preliminaries}
\setcounter{equation}{0}
Identify $\R^4$ with $\C^2$ via $\left(x_1, x_2, x_3, x_4\right) \leftrightarrow \left(x_1+ix_2, x_3+ix_4\right)$, where $i=\sqrt{-1}$. We use the almost complex structure $J$ on $\R^4$ which corresponds to the multiplication of $i$ in $\C^2$. 
Let $\left\langle\ ,\ \right\rangle$ be the standard hermitian product in $\C^2$. Then the Euclidean inner product in $\R^4$ is given by the real part of $\left\langle\ ,\ \right\rangle$ under the identification.

The K\"{a}hler form $\omega$ on $\left( \R^4, J\right)$ is given by
	\begin{align*}
	\omega=\d x_1\wedge \d x_2+\d x_3\wedge \d x_4,
	\end{align*}
where $x_i$'s are coordinate functions of $\R^4$. An immersion $F : \Sigma^2 \to \R^4$ is called a \emph{Lagrangian} if 
	\begin{align}\label{Lag}
	F^{*}\omega=0,
	\end{align}
and we call $F : \Sigma^2 \to \R^4$ a \emph{Lagrangian self-shrinker} if it satisfies (\ref{SS}) and the Lagrangian condition (\ref{Lag}).

For arbitrary $p\in\Sigma$, one can always find local \emph{isothermal coordinates} $x$, $y$ in a neighborhood of $p$ such that
	\begin{align}\label{Iso}
	|F_x|=|F_y|,\ F_x\perp F_y,
	\end{align}
where the lower indices denote partial derivatives. In terms of the isothermal coordinates, the Lagrangian condition (\ref{Lag}) is equivalent to
	\begin{align}\label{isLag}
	JF_x \perp F_y.
	\end{align}
	
Now let $F : \Sigma^2 \to \R^4$ be a Lagrangian self-shrinker symmetric with respect to a hyperplane $P$. First, we claim that $P$ contains the origin $O$. Indeed, by the reflection symmetry, $\Sigma$ and its reflected surface shrink to the same point along the mean curvature flow. As the surface shrinks to the origin,  $O$ should be invariant under the reflection, which implies that $O \in P$.

Let $\nu\in \R^4\simeq\C^2$ be the unit normal vector of $P$. Then there exists a unitary matrix $G\in U(2)$ such that
	\begin{align*}
	G\cdot \nu=(1, 0) \in \C^2.
	\end{align*}
The unitary group $U(2)$ can be identified as a subgroup of $O(4)$ consisting of elements which commute with $J$, so there is an orthogonal transformation $\tilde{G} \in O(4)$ corresponding to $G$. Then $\tilde{G}$ sends $P$ to the coordinate hyperplane $\{x_1=0\}$. Moreover, it preserves Lagrangian submanifolds since it commutes with $J$. Thus, $\tilde{G}\circ F : \Sigma^2 \to \R^4$ is a Lagrangian self-shrinker symmetric to $\{x_1=0\}$. Therefore it suffices to consider Lagrangian self-shrinkers $F : \Sigma^2 \to \R^4$ symmetric with respect to $\{x_1=0\}$. This family of self-shrinkers will be denoted by $\mathscr{F}$.

Lagrangian self-shrinkers with reflection symmetry can be easily found in the products of Abresch-Langer curves. For later use, we recall two results on Abresch-Langer curves and their product self-shrinkers. The first result is about the \emph{transcendental relation} on those curves (see Theorem A in \cite{AL} and Lemma 5.3 in \cite{SmoAC}).

\begin{lem}[Lemma 5.3 in \cite{SmoAC}]\label{TR}
Let $\Gamma \subset \R^2$ be one of the Abresch-Langer curves. Then there exists a positive constant $c_{\Gamma}$ such that 
	\begin{align}\label{treq}
	ke^{-\frac{r^2}{2}}=c_{\Gamma}
	\end{align}
holds everywhere on $\Gamma$, where $r$ is the distance from the origin and $k$ is the curvature. If two Abresch-Langer curves $\Gamma_1$ and $\Gamma_2$ have the same constants, i.e., $c_{\Gamma_1}=c_{\Gamma_2}$, then (up to a rigid motion)  $\Gamma_1=\Gamma_2$. Moreover, the critical values $k_c$ of the curvature $k$ satisfy $k_ce^{-\frac{k_c^2}{2}}=c_{\Gamma}$.
\end{lem}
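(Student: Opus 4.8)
The plan is to reduce the problem to an autonomous planar ODE. Work in arc-length parametrization $s$, write $T=F'$, and let $N$ be the unit normal with $\{T,N\}$ positively oriented, so that the Frenet equations $T'=kN$, $N'=-kT$ hold with $k$ the signed curvature. For a planar curve $\vec H=T'=kN$, hence the self-shrinker equation $\vec H=-F^{\perp}$ becomes the scalar identity $k=-\langle F,N\rangle$. Introducing $u:=\langle F,T\rangle$, one has $r^2=|F|^2=u^2+\langle F,N\rangle^2=u^2+k^2$, and differentiating $k=-\langle F,N\rangle$ and $u=\langle F,T\rangle$ with the Frenet relations yields the closed system $k'=ku$ and $u'=1-k^2$.

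Given this, the transcendental relation is immediate: using $(r^2)'=2\langle F,T\rangle=2u$ and $k'=ku$ I compute $\frac{\d}{\d s}\bigl(ke^{-r^2/2}\bigr)=e^{-r^2/2}(k'-ku)=0$, so $ke^{-r^2/2}$ is constant along $\Gamma$; call this constant $c_{\Gamma}$. For positivity, note that the line $\{k=0\}$ is invariant for the system (there $k'=ku=0$), so by uniqueness of solutions $k$ either vanishes identically---forcing a straight line, which cannot be closed---or is nowhere zero; orienting $\Gamma$ so that $k>0$ gives $c_{\Gamma}>0$. The critical values of $k$ occur exactly where $k'=0$, i.e. where $u=0$; there $r^2=u^2+k^2=k_c^2$, and substituting into the transcendental relation gives $k_ce^{-k_c^2/2}=c_{\Gamma}$.

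For the rigidity statement I would analyze the phase portrait of $(k,u)$. The function $H(k,u)=ke^{-(k^2+u^2)/2}$ is a first integral of the system, so every trajectory lies in a level set $\{H=c\}$. On the half-plane $\{k>0\}$ the only critical point of $H$ is $(1,0)$, a nondegenerate maximum with value $e^{-1/2}$ (corresponding to the round self-shrinking circle of radius $1$), and for each $c\in(0,e^{-1/2})$ the level set $\{H=c\}$ is a single smooth closed curve encircling $(1,0)$. Hence a value $c_{\Gamma}$ singles out a unique periodic orbit, and any two arc-length solutions supported on it differ only by a phase shift $s\mapsto s+s_0$. Two Abresch-Langer curves with equal $c_{\Gamma}$ therefore have curvature functions agreeing after such a shift, so by the fundamental theorem of plane curves they are congruent. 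The period and total turning $\oint k\,\d s$ of the orbit---hence the winding behaviour and the closing-up condition---are likewise determined by $c_{\Gamma}$ alone.

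The step I expect to be most delicate is the global description of the phase portrait: proving that each level set $\{H=c\}$ in $\{k>0\}$ is a single closed orbit with a well-defined period and rotation number, rather than the (routine) verification that $H$ is conserved. This is precisely the part carried out in the original Abresch-Langer study of closed self-shrinking curves, so for the purposes of this paper I would invoke their classification at that point and keep the self-contained computation above as the derivation of (\ref{treq}).
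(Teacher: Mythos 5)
The paper never proves this lemma: it is quoted directly from the literature (Theorem A of Abresch--Langer and Lemma 5.3 of Smoczyk), so there is no internal proof to compare against, and your proposal is in effect supplying a proof the paper omits. Your argument is correct. The reduction via the Frenet equations to the autonomous system $k'=ku$, $u'=1-k^2$ with $r^2=u^2+k^2$ is right; the conservation of $ke^{-r^2/2}$ follows exactly as you compute; the invariance of the line $\{k=0\}$ together with ODE uniqueness correctly rules out $c_\Gamma=0$ for a closed curve; and the critical-value identity $k_ce^{-k_c^2/2}=c_\Gamma$ is immediate since $k'=0$ forces $u=0$, hence $r^2=k_c^2$. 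For the rigidity claim, the step you flag as delicate --- that for $c\in(0,e^{-1/2})$ the level set $\{H=c\}\cap\{k>0\}$ is a single closed curve --- does not actually require invoking the Abresch--Langer classification: for fixed $u$ the function $k\mapsto ke^{-(k^2+u^2)/2}$ increases to the value $e^{-(1+u^2)/2}$ at $k=1$ and then decreases to zero, so the level set is the union of two graphs $k_\pm(u)$ over the interval $|u|\le\sqrt{2\ln(1/c)-1}$, meeting at the endpoints, i.e.\ one closed curve. Once the orbit is unique, agreement of the curvature functions up to a phase shift plus the fundamental theorem of plane curves gives congruence of the two curves, as you say. So your proposal is a complete, self-contained proof of a statement that the paper only cites; the only refinement worth making is the elementary level-set verification above, which removes the one external appeal you left in.
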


The second one is about the characterization of products of Abresch-Langer curves as Lagrangian self-shrinkers with constant Gauss curvature in $\R^4$ (see Proposition 5.6 in \cite{LiWang}).

\begin{prop}[Proposition 5.6 in \cite{LiWang}]\label{Flat}
Let $F : \Sigma^2 \to \R^4$ be a compact Lagrangian self-shrinker with constant Gauss curvature $K$. Then $K=0$ and $\Sigma^2$ is given by the product of two Abresch-Langer curves. 
\end{prop}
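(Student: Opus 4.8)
The plan is to pin down the constant $K$ using the intrinsic classification of closed constant-curvature surfaces, eliminate the positive and negative cases, and then realize the resulting flat self-shrinker as a product of planar curves. Throughout I would work in the orthonormal \emph{Lagrangian frame}: pick a tangent orthonormal frame $e_1,e_2$ and take $e_3=Je_1$, $e_4=Je_2$ as a normal frame. The defining feature of the Lagrangian condition is that the cubic form $C(X,Y,Z)=\langle\sigma(X,Y),JZ\rangle$ is fully symmetric, so the components $h^3_{ij}=\langle\sigma(e_i,e_j),Je_1\rangle$ and $h^4_{ij}=\langle\sigma(e_i,e_j),Je_2\rangle$ are governed by a single symmetric $3$-tensor. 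I will repeatedly use the Gauss relation $2K=|\vec H|^2-|\sigma|^2$ and the self-shrinker structure $\vec H=-F^\perp$.

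First, Gauss--Bonnet gives $K\cdot\mathrm{Area}(\Sigma)=2\pi\chi(\Sigma)$, so $K$ and $\chi(\Sigma)$ share the same sign. If $K>0$ then $\chi(\Sigma)>0$, which forces $\Sigma$ to be a sphere (after passing to the orientation double cover if necessary); but a Lagrangian self-shrinking sphere does not exist by Smoczyk's theorem \cite{SmoSph}, so $K>0$ is impossible. Hence $K\le 0$, and the whole problem reduces to excluding $K<0$ and then analyzing the flat case.

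The heart of the matter --- and the step I expect to be the main obstacle --- is ruling out $K<0$, where the intrinsic classification only tells us that $\Sigma$ is a higher-genus surface and gives no immediate contradiction. Here I would run a weighted Bochner argument with the drift Laplacian $\mathcal{L}=\laplacian-\langle F,\grad\,\cdot\,\rangle$, which is self-adjoint with respect to the Gaussian weight $e^{-|F|^2/2}$. Differentiating the self-shrinker equation yields a Simons-type identity of the schematic form $\tfrac12\mathcal{L}|\sigma|^2=|\grad\sigma|^2+|\sigma|^2+Q(\sigma)$, where $Q$ is quartic in $\sigma$; integrating against $e^{-|F|^2/2}$ and substituting $|\sigma|^2=|\vec H|^2-2K$ (with $K$ a negative constant, so $|\sigma|^2\ge 2|K|>0$ everywhere) should force a sign contradiction. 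The delicate point is controlling the quartic curvature term $Q$ in the Lagrangian codimension-two setting; I would exploit the full symmetry of the cubic form $C$ to diagonalize $Q$ and show that it has the favorable sign, much as in the hypersurface gap theorems, thereby concluding $K\ge 0$ and hence $K=0$.

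It remains to treat the flat case $K=0$. With $K=0$ the Gauss relation becomes $|\sigma|^2=|\vec H|^2$, and combining flatness with the Codazzi equations and the symmetry of $C$ lets me choose a parallel tangent frame in which the cubic form is diagonal, i.e.\ $\sigma(e_1,e_2)=0$ with the two lines $\mathrm{span}(e_1)$ and $\mathrm{span}(e_2)$ spanning parallel $J$-invariant distributions. By a standard reducibility (Moore-type) argument the immersion then splits as an orthogonal product $F=(\gamma_1,\gamma_2)$ into two planar curves, one in each $\C$-factor of $\C^2=\C\times\C$. Feeding this product into $\vec H=-F^\perp$ decouples the self-shrinker system into the planar curve-shrinker equation $k_i=-\langle\gamma_i,n_i\rangle$ for each $\gamma_i$; compactness of $\Sigma$ forces each $\gamma_i$ to be closed, so each is an Abresch--Langer curve, and Lemma \ref{TR} identifies them through their transcendental constants. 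This exhibits $\Sigma$ as the product of two Abresch--Langer curves and completes the proof. The residual technical burden here is the rigorous derivation of the diagonalizing parallel frame from flatness and Codazzi, which is where most of the bookkeeping lives.
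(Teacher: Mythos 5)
First, a point of context: the paper never proves this proposition — it is imported verbatim as Proposition 5.6 of \cite{LiWang} and used as a black box, so your proposal has to stand on its own rather than be compared with an argument in the text. Your first step is fine: Gauss--Bonnet plus Smoczyk's nonexistence theorem \cite{SmoSph} (applied after lifting to the orientation cover if needed) does rule out $K>0$. The two remaining steps, however, each contain a genuine gap, and the second one is fatal as stated.

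The decisive problem is the flat case. You claim that flatness, the Codazzi equations, and the full symmetry of the cubic form $C$ already produce a \emph{parallel} frame with $\sigma(e_1,e_2)=0$, after which a Moore-type splitting gives the product of planar curves. This is false, and Pinkall's Hopf tori are counterexamples: for any closed curve $\gamma\subset S^2$, the preimage $\pi^{-1}(\gamma)\subset S^3\subset\C^2$ under the Hopf map is a flat torus, and it is Lagrangian, since with $V=JF$ tangent to the fibers and $X$ a horizontal lift one has $\omega(V,X)=\langle J(JF),X\rangle=-\langle F,X\rangle=0$. Because $JV=-F$ and $JX$ are normal, one checks that $\nabla_VV=\nabla_XV=0$, so $\{V,X\}$ is a parallel frame, and in this frame the cubic form is
\begin{align*}
C_{111}=1,\quad C_{112}=0,\quad C_{122}=1,\quad C_{222}=\tilde\kappa,
\end{align*}
where $\tilde\kappa$ is governed by the geodesic curvature of $\gamma$. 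Parallel frames differ by \emph{constant} rotations, while the rotation needed to diagonalize this form varies from point to point whenever $\tilde\kappa$ is non-constant; hence no parallel diagonalizing frame exists, and such a torus is not even locally a product of planar curves. These surfaces satisfy every hypothesis you invoke in the flat step (flat, Lagrangian, Codazzi), so the self-shrinker equation must enter that step in an essential way — and your outline never uses it there. Separately, your exclusion of $K<0$ is only a hope: you concede that the quartic term $Q$ in the Simons-type identity is the delicate point and assert it ``should'' have a favorable sign. In codimension two there is no such favorable sign in general; this is precisely why the known rigidity results of Castro--Lerma \cite{CLLag} and Li--Wang \cite{LiWang} require pointwise hypotheses such as $|\sigma|^2\le 2$, or Lagrangian-specific integral identities, rather than a generic Bochner argument. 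As it stands, both the $K<0$ case and the $K=0$ case remain unproved.
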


Throughout this paper, the gradient and Laplacian on a given surface will be denoted by $\grad$ and $\laplacian$, respectively.

\section{Local Equations}
\setcounter{equation}{0}

Let $F : \Sigma^2 \to \R^4\simeq \C^2$ be a closed Lagrangian self-shrinker symmetric with respect to $\{x_1=0\}$, i.e., $F\in \mathscr{F}$. Using isothermal coordinates, we obtain the following local equations:

\begin{prop}\label{PLE}
Suppose that $F : \Sigma^2 \to \C^2$ is given by $F=(A, B)$ for some complex-valued functions $A$ and $B$. Then
	\begin{enumerate}
		\item \label{(1)}$|A_x|=|B_y|$, $|A_y|=|B_x|$,
		\item \label{(2)}$A_x\bar{A_y}+B_x\bar{B_y}=0$, and $A_x\bar{A_y}$, $B_x\bar{B_y}$ 	are real-valued,
		\item \label{(3)}$(\laplacian A+A)\bar{A_x}$, $(\laplacian A+A)\bar{A_y}$, $(\laplacian B+B)\bar{B_x}$, $(\laplacian B+B)\bar{B_y}$ are real-valued,
	\end{enumerate}
hold true on all of $\Sigma^2$, where $x, y$ are local isothermal coordinates.	
\end{prop}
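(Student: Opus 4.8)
The plan is to unpack what each of the three items actually says about the building blocks of the Lagrangian self-shrinker equation, and then verify them by straightforward computation in isothermal coordinates. Since $F=(A,B)$ with $A,B$ complex-valued, the vectors $F_x=(A_x,B_x)$ and $F_y=(A_y,B_y)$ live in $\mathbb{C}^2$, and the Euclidean inner product on $\mathbb{R}^4$ is $\mathrm{Re}\langle\,,\,\rangle$ where $\langle\,,\,\rangle$ is the Hermitian product. I expect that (1) and (2) are essentially repackagings of the isothermal and Lagrangian conditions, while (3) is where the self-shrinker equation and the reflection symmetry enter.

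**Items (1) and (2): isothermal + Lagrangian.** First I would translate the isothermal conditions $|F_x|=|F_y|$ and $F_x\perp F_y$ into Hermitian language. The reality and the perpendicularity will come from combining these with the Lagrangian condition $JF_x\perp F_y$ from (\ref{isLag}). Here the key point is that $J$ acts as multiplication by $i$, so $JF_x=(iA_x,iB_x)$, and the two real conditions $\mathrm{Re}\langle F_x,F_y\rangle=0$ and $\mathrm{Re}\langle iF_x,F_y\rangle=0$ together say that the full complex quantity $\langle F_x,F_y\rangle=A_x\bar A_y+B_x\bar B_y$ has both vanishing real and imaginary part, i.e. it is zero; that handles the first equation in (2). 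For the splitting into real-valued pieces $A_x\bar A_y$ and $B_x\bar B_y$ and for (1), I would bring in the reflection symmetry. Symmetry with respect to $\{x_1=0\}$ means the surface is invariant under the reflection $R(x_1,x_2,x_3,x_4)=(-x_1,x_2,x_3,x_4)$; writing this in complex coordinates, $R$ sends $(A,B)=(x_1+ix_2,x_3+ix_4)$ to $(-\bar A,\,\cdots)$ — I would need to pin down the exact induced involution on $\Sigma$ and on the coordinates and exploit that it fixes the relevant combinations, forcing the imaginary parts of $A_x\bar A_y$ and $B_x\bar B_y$ to vanish separately. Then (1) follows by comparing $|A_x|^2+|B_x|^2=|A_y|^2+|B_y|^2$ (from $|F_x|=|F_y|$) with the refined information.

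**Item (3): the self-shrinker equation.** This is the step I expect to be the main obstacle, since it is where one must genuinely use $\vec H=-F^\perp$. In isothermal coordinates the mean curvature vector is $\vec H=\frac{1}{|F_x|^2}(F_{xx}+F_{yy})^\perp=\frac{1}{|F_x|^2}(\laplacian F)^\perp$, so the self-shrinker equation reads $(\laplacian F+F)^\perp=0$, i.e. $\laplacian F+F$ is tangent to $\Sigma$. Componentwise $\laplacian F+F=(\laplacian A+A,\,\laplacian B+B)$, and tangency means this vector is a real linear combination of $F_x$ and $F_y$. The plan is to write $\laplacian F+F=aF_x+bF_y$ for real functions $a,b$ and then take Hermitian inner products with the coordinate vectors. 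The reality claims in (3) should then emerge from pairing this tangency relation against $F_x,F_y$ and again invoking the reflection symmetry to decouple the $A$- and $B$-contributions, exactly as in (2).

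**Main obstacle and strategy.** The genuinely delicate part is using the reflection symmetry correctly to \emph{separate} the combined complex quantities into individually real-valued $A$- and $B$-pieces; the self-shrinker and isothermal equations alone give the summed relations, but the splitting requires the involution. I would therefore set up the induced reflection on $\Sigma$ explicitly at the start, determine how it acts on $x,y,A,B$ and their derivatives, and then carry the same symmetry argument through all three items uniformly, treating (2) and (3) as two instances of the same "take a Hermitian pairing, use symmetry to kill the imaginary cross-term" mechanism.
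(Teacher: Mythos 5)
Your overall architecture is reasonable, but the crux of the proposition --- the pointwise reality of $A_x\bar{A_y}$ and $B_x\bar{B_y}$ in item (2), on which your item (3) also relies --- is left as an unexecuted plan, and the mechanism you gesture at would not deliver it. You propose to ``pin down the exact induced involution on $\Sigma$'' and ``exploit that it fixes the relevant combinations.'' But the induced involution $\phi$ (defined by $F\circ\phi=R\circ F$, where $R(x_1,x_2,x_3,x_4)=(-x_1,x_2,x_3,x_4)$) relates data at $p$ to data at $\phi(p)$; away from its fixed-point set, invariance statements compare two different points and cannot force an imaginary part to vanish at a single point. The argument that actually closes this step is more direct: because $F(\Sigma)$ is invariant under $R$ and is a Lagrangian surface, the reflected immersion $R\circ F=(-\bar{A},B)$ is itself a Lagrangian immersion, and since $R$ is a Euclidean isometry the coordinates $x,y$ remain isothermal for it. This is exactly where the symmetry hypothesis has content: $R$ neither commutes nor anticommutes with $J$ (it is conjugate-linear in the first factor only, and $R^*\omega=-\d x_1\wedge \d x_2+\d x_3\wedge \d x_4\neq\pm\omega$), so ``$R\circ F$ is Lagrangian'' is \emph{not} automatic for an arbitrary Lagrangian immersion --- it is forced by the symmetry of the image. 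Granting it, your combined identity $A_x\bar{A_y}+B_x\bar{B_y}=0$ also holds with $A$ replaced by $-\bar{A}$, giving $\bar{A_x}A_y+B_x\bar{B_y}=0$; comparing the two yields $A_x\bar{A_y}=\bar{A_x}A_y$, hence $A_x\bar{A_y}$ and $B_x\bar{B_y}=-A_x\bar{A_y}$ are real. Without this step (or an equivalent one), items (2) and (3) remain unproved.

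Two smaller points. First, item (1) needs no symmetry at all: from $A_x\bar{A_y}+B_x\bar{B_y}=0$ one gets $|A_x||A_y|=|B_x||B_y|$, and combining this with $|A_x|^2+|B_x|^2=|A_y|^2+|B_y|^2$ gives $\bigl(|A_x|^2+|B_x|^2\bigr)\bigl(|A_y|^2-|B_x|^2\bigr)=0$, whence $|A_y|=|B_x|$ and $|A_x|=|B_y|$ since $F$ is an immersion; your ordering suggests you thought the symmetry-refined information was required here. Second, your plan for (3) --- writing the shrinker equation as tangency of $\laplacian F+F$, expanding $\laplacian F+F=aF_x+bF_y$ with $a,b$ real, and pairing componentwise so that $(\laplacian A+A)\bar{A_x}=a|A_x|^2+bA_y\bar{A_x}$ is manifestly real once (2) is known --- is correct and in fact cleaner than the paper's route, which computes $\vec{H}$ and $F^{\perp}$ explicitly and reads off the componentwise equations $(\laplacian A+A)\bar{A_x}=(\laplacian \bar{A}+\bar{A})A_x$, etc. Note that no further invocation of the symmetry is needed there beyond (2) itself, and watch your normalization: with the surface Laplacian one has $\vec{H}=(\laplacian F)^{\perp}$, so the self-shrinker equation is $(\laplacian F+F)^{\perp}=0$, whereas with the coordinate Laplacian $F_{xx}+F_{yy}$ the position term would carry the conformal factor $\lambda=|F_x|^2$.
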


\begin{proof}
Since the coordinates are assumed to be isothermal, from (\ref{Iso}) we have
	\begin{align}
	F_x \perp F_y &\Leftrightarrow \mbox{Re}(A_x\bar{A_y}+B_x\bar{B_y})=0,\label{conf1}\\
	|F_x|^2=|F_y|^2 &\Leftrightarrow |A_x|^2+|B_x|^2=|A_y|^2+|B_y|^2,\label{conf2}
	\end{align}
and the Lagrangian condition (\ref{isLag}) implies
	\begin{align}
	JF_x \perp F_y &\Leftrightarrow \mbox{Re}(iA_x\bar{A_y}+iB_x\bar{B_y})=0 	\Leftrightarrow \mbox{Im}(A_x\bar{A_y}+B_x\bar{B_y})=0\label{lag}.
	\end{align}
Combining (\ref{conf1}) and (\ref{lag}), we obtain
	\begin{align}
	A_x\bar{A_y}+B_x\bar{B_y}=0\label{eq1},
	\end{align}
and then with (\ref{conf2}), we conclude that
	\begin{align}
	|A_x|=|B_y|,\ |A_y|=|B_x|\label{eq2}.
	\end{align}

On the other hand, since $\Sigma^2$ has the reflection symmetry with respect to $x_1$-hyperplane, the same equations hold when we replace $A$ by $-\bar{A}$. 

\noindent Now substituting $A$ by $-\bar{A}$ in (\ref{eq1}) gives
	\begin{align}
	\Bar{A_x}A_y+B_x\bar{B_y}=0\label{eq11}.
	\end{align}
Hence we get $A_x\bar{A_y}=\bar{A_x}A_y$ and $B_x\bar{B_y}=\bar{B_x}B_y$ from (\ref{eq1}) and (\ref{eq11}), which imply that $A_x\bar{A_y}$ and $B_x\bar{B_y}$ are real-valued.

In order to prove (\ref{(3)}), we compute each term in (\ref{SS}), $\vec{H}=-F^{\perp}$, directly. Let $\lambda\coloneqq|F_x|^2=|F_y|^2$. As $\left\{\frac{1}{\sqrt{\lambda}}F_x, \frac{1}{\sqrt{\lambda}}F_y\right\}$ form an orthonormal frame, the mean curvature vector can be expressed as
	\begin{align}
	\vec{H}&=\left(\laplacian F\right)^{\perp} \nonumber \\
			&=\laplacian F-\frac{\mbox{Re}\langle\laplacian F, F_x\rangle}{\lambda}F_x-\frac{\mbox{Re}\langle\laplacian F, F_y\rangle}{\lambda}F_y \nonumber \\
			&=\left(\laplacian A-\frac{\mbox{Re}(\laplacian A \bar{A_x}+\laplacian B\bar{B_x})}{\lambda}A_x-\frac{\mbox{Re}(\laplacian A\bar{A_y}+\laplacian B\bar{B_y})}{\lambda}A_y,\right. \nonumber \\
			&\left. \ \ \ \ \ \ \ \laplacian B-\frac{\mbox{Re}(\laplacian A\bar{A_x}+\laplacian B\bar{B_x})}{\lambda}B_x-\frac{\mbox{Re}(\laplacian A\bar{A_y}+\laplacian B\bar{B_y})}{\lambda}B_y\right),\label{meancurv}
	\end{align}
and similarly the normal part of the position vector is given by
	\begin{align}
	F^{\perp}&=F-\frac{\mbox{Re}\langle F, F_x\rangle}{\lambda}F_x-\frac{\mbox{Re}\langle F, F_y\rangle}{\lambda}F_y \nonumber \\
			&=\left(A-\frac{\mbox{Re}(A\bar{A_x}+B\bar{B_x})}{\lambda}A_x-\frac{\mbox{Re}(A\bar{A_y}+B\bar{B_y})}{\lambda}A_y,\right. \nonumber \\
			&\left. \ \ \ \ \ \ \ B-\frac{\mbox{Re}(A\bar{A_x}+B\bar{B_x})}{\lambda}B_x-\frac{\mbox{Re}(A\bar{A_y}+B\bar{B_y})}{\lambda}B_y\right).\label{normalpart}
	\end{align}

Let $\tilde{u}\coloneqq\bar{A_x}A_y$, which is a real-valued function. Clearly, $B_x\bar{B_y}=-\tilde{u}$, by (\ref{eq1}). 
From (\ref{eq2}), $|A_y|=|B_x|$, we may write
	\begin{align*}
	\lambda=|A_x|^2+|A_y|^2.
	\end{align*}
Since $F$ is an immersion, $\lambda\neq0$ and either $A_x\neq0$ or $A_y\neq0$ hold. In both cases, a similar argument applies, so we may assume that $A_x\neq0$.
Then we have the following relations:
	\begin{align*}
	A_y=uA_x, \ B_x=-uB_y,
	\end{align*}
where $u\coloneqq \frac{\tilde{u}}{|A_x|^2}$ is also real-valued. By the above relations,
	\begin{align*}
	&\mbox{Re}(\laplacian A\bar{A_x}+\laplacian B\bar{B_x})A_x+\mbox{Re}(\laplacian A\bar{A_y}+\laplacian B\bar{B_y})A_y \\
	&=\mbox{Re}(\laplacian A\bar{A_x}-u\laplacian B\bar{B_y})A_x+\mbox{Re}(u\laplacian A\bar{A_x}+\laplacian B\bar{B_y})(uA_x) \\
	&=(1+u^2)\mbox{Re}(\laplacian A\bar{A_x})A_x \\
	&=\frac{\lambda}{|A_x|^2}\mbox{Re}(\laplacian A\bar{A_x})A_x, 
	\end{align*}
and we compute
	\begin{align*}
	\laplacian A-\frac{\mbox{Re}(\laplacian A\bar{A_x}+\laplacian B\bar{B_x})}{\lambda}A_x-\frac{\mbox{Re}(\laplacian A\bar{A_y}+\laplacian B\bar{B_y})}{\lambda}A_y=\frac{(\laplacian A\bar{A_x}-\laplacian \bar{A}A_x)}{2|A_x|^2}A_x.
	\end{align*}
By applying similar computations to (\ref{meancurv}) and (\ref{normalpart}), we obtain
	\begin{align*}
	\vec{H}=\left(\frac{(\laplacian A\bar{A_x}-\laplacian \bar{A}A_x)}{2|A_x|^2}A_x, \ \frac{(\laplacian B\bar{B_y}-\laplacian \bar{B}B_y)}{2|B_y|^2}B_y\right)
	\end{align*}
and
	\begin{align*}
	F^{\perp}=\left(\frac{(A\bar{A_x}-\bar{A}A_x)}{2|A_x|^2}A_x, \ \frac{(B\bar{B_y}-\bar{B}B_y)}{2|B_y|^2}B_y\right).
	\end{align*}
Consequently, (\ref{SS}), $\vec{H}=-F^{\perp}$, is equivalent to
	\begin{align*}
	(\laplacian A+A)\bar{A_x}=(\laplacian \bar{A}+\bar{A})A_x
	\end{align*}
and
	\begin{align*}
	(\laplacian B+B)\bar{B_y}=(\laplacian \bar{B}+\bar{B})B_y,
	\end{align*}
which imply that $(\laplacian A+A)\bar{A_x}$ and $(\laplacian B+B)\bar{B_y}$ are real-valued. Since $A_x\bar{A_y}$ and $B_x\bar{B_y}$ are also real-valued, (\ref{(3)}) is proved.
\end{proof}

\section{Global Relations}
\setcounter{equation}{0}

In this section, we prove that Lagrangian self-shrinkers in $\mathscr{F}$ have a special property analogous to the transcendental relation on Abresch-Langer curves as in Lemma \ref{TR}. From local equations in Proposition \ref{PLE}, we obtain the following lemma:

\begin{lem}\label{LC}
With the same notation in Proposition \ref{PLE}, suppose that $|A|>0$ in a neighborhood of $p\in \Sigma^2$. Let $A=re^{i\theta}$ be a locally given polar representation of $A$. Then $r^4|\grad \theta|^2=Ce^{r^2}$ for some constant $C$.
\end{lem}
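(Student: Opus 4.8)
The plan is to translate the three local equations of Proposition \ref{PLE} into scalar relations for the modulus $r=|A|$ and the phase $\theta$, and then to show that these relations combine into a single second-order ODE whose first integral is exactly $r^4|\grad\theta|^2=Ce^{r^2}$. First I would write $A=re^{i\theta}$ and expand. A direct computation gives $A_x=(r_x+ir\theta_x)e^{i\theta}$, $A_y=(r_y+ir\theta_y)e^{i\theta}$, and, since the conformal factor of the isothermal metric cancels, $\laplacian A+A=(P+iQ)e^{i\theta}$ with
\begin{align*}
P=\laplacian r-r|\grad\theta|^2+r, \qquad Q=2\langle\grad r,\grad\theta\rangle+r\laplacian\theta,
\end{align*}
where $\grad,\laplacian$ are the intrinsic operators. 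Feeding this into Proposition \ref{PLE} I read off three facts: the reality of $A_x\bar{A_y}$ in (\ref{(2)}) gives $r_y\theta_x=r_x\theta_y$, i.e.\ $\grad r\parallel\grad\theta$; the reality of $(\laplacian A+A)\bar{A_x}$ and $(\laplacian A+A)\bar{A_y}$ in (\ref{(3)}) gives $Q\grad r=Pr\grad\theta$; and, using $|A_y|=|B_x|$ from (\ref{(1)}) together with $\lambda=|A_x|^2+|B_x|^2$ and $|A_x|^2+|A_y|^2=\lambda(|\grad r|^2+r^2|\grad\theta|^2)$, the ``unit speed'' identity
\begin{align*}
|\grad r|^2+r^2|\grad\theta|^2=1.
\end{align*}

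Next I would localize on the open set $\{\grad\theta\neq0\}$; on its complement $\theta$ is locally constant, which forces $C=0$ and the trivial relation. There the condition $\grad r\parallel\grad\theta$ lets me write $r=\rho(\theta)$, so $\grad r=\rho'\grad\theta$ and $\laplacian r=\rho''|\grad\theta|^2+\rho'\laplacian\theta$. Substituting into $Q\grad r=Pr\grad\theta$, the $\laplacian\theta$ terms cancel and I obtain
\begin{align*}
\left(2\rho'^2-\rho\rho''+\rho^2\right)|\grad\theta|^2=\rho^2,
\end{align*}
while the unit-speed identity reads $(\rho'^2+\rho^2)|\grad\theta|^2=1$. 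Eliminating $|\grad\theta|^2$ between these two yields the profile equation
\begin{align*}
\rho\rho''=(2-\rho^2)\rho'^2+(1-\rho^2)\rho^2.
\end{align*}

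Finally I would check that this ODE is equivalent to $\frac{d}{d\theta}\!\left[\frac{\rho^4}{\rho'^2+\rho^2}e^{-\rho^2}\right]=0$, the relevant integrating factor being $\rho^{-4}e^{\rho^2}$, so that $\frac{\rho^4}{\rho'^2+\rho^2}e^{-\rho^2}$ equals a constant $C$. Since $\frac{\rho^4}{\rho'^2+\rho^2}=r^4|\grad\theta|^2$ by the unit-speed identity, this is the assertion, and a short continuity argument propagates the constant across the whole neighborhood.

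The step I expect to be the main obstacle is recognizing that the self-shrinker relations coming from $A$ alone, namely (\ref{(2)}) and (\ref{(3)}), are \emph{not} sufficient: the combination $Q\grad r=Pr\grad\theta$ loses all information about $\laplacian\theta$, so it produces only $(2\rho'^2-\rho\rho''+\rho^2)|\grad\theta|^2=\rho^2$ and leaves the profile underdetermined. The essential extra input is the norm equation $|\grad r|^2+r^2|\grad\theta|^2=1$ extracted from the conformality/Lagrangian bookkeeping in (\ref{(1)}); only after coupling it to the previous relation does one get a closed ODE for $\rho$. Spotting the correct integrating factor $\rho^{-4}e^{\rho^2}$, equivalently that $e^{-r^2}$ is the right weight exactly as in the Abresch--Langer relation (\ref{treq}), is what makes the first integral visible.
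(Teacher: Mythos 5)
Your proposal is correct, and its computational core coincides with the paper's own proof: the paper likewise derives $\grad r \parallel \grad\theta$ from the reality of $A_x\bar{A_y}$ in (\ref{(2)}), uses the identity $|\grad r|^2+r^2|\grad\theta|^2=1$ (which, as you rightly stress, is the input from (\ref{(1)}) plus isothermality), and turns the reality conditions (\ref{(3)}) into a first-order relation whose first integral is $r^4|\grad\theta|^2e^{-r^2}$. Your substitution $r=\rho(\theta)$ is exactly the paper's case ``$|\grad\theta|\neq0$'', where it writes $\grad r=\eta\grad\theta$ (so $\eta=\rho'$) and shows $\grad\bigl(r^2e^{-\frac{1}{2}r^2}/\sqrt{r^2+\eta^2}\bigr)=0$; your profile ODE and integrating factor are a faithful ODE reformulation of that computation. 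Where you genuinely differ is at the degenerate set $\{\grad\theta=0\}$: the paper covers those points by a second, symmetric computation (case ``$|\grad r|\neq0$'', writing $\grad\theta=\mu\grad r$), so that $r^4|\grad\theta|^2e^{-r^2}$ is locally constant at \emph{every} point and connectedness finishes the proof, whereas you dismiss the complement with ``$\theta$ is locally constant there''---imprecise, since that set is closed and the real issue is its boundary points---and defer to ``a short continuity argument''. That argument does exist and is short, but you must spell it out: with $f=r^4|\grad\theta|^2e^{-r^2}$ and $r>0$, the set $W=\{\grad\theta\neq0\}$ equals $\{f\neq0\}$; on a connected component $V$ of $W$ your ODE gives $f\equiv c_V\neq0$, and if $V$ were not the whole (connected) neighborhood, $V$ would have a boundary point inside the neighborhood, necessarily outside $W$, where continuity of $f$ forces $c_V=0$, a contradiction; hence $W$ is either empty or the whole neighborhood, and in both cases $f$ is a single constant. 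This full strength---one constant near every point, including points where $\grad\theta$ vanishes---is precisely what Lemma \ref{CP} and Proposition \ref{GC} use downstream, so the clopen argument is not optional; once it is included, your proof is complete and trades the paper's second computation for a soft topological one.
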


\begin{proof}
By a straightforward calculation, we have
	\begin{align*}
	A_x=\left(r_x+ir\theta_x\right)e^{i\theta},\ A_y=\left(r_y+ir\theta_y\right)e^{i\theta},
	\end{align*}
where $x, y$ are isothermal coordinates.
It then follows from  (\ref{(2)}) in Proposition \ref{PLE}
	\begin{align*}
	\frac{\mbox{Im}(A_x\bar{A_y})}{r}=\theta_xr_y-r_x\theta_y=0, 
	\end{align*}
which implies that $\grad r$ and $\grad \theta$ are linearly dependent.
Moreover, $|\grad r|^2+r^2|\grad \theta|^2=1$ implies that either $|\grad r|\neq 0$ or $|\grad \theta|\neq 0$. We treat both cases separately as follows.

First, assume that $|\grad r|\neq 0$. We may write $\grad \theta=\mu \grad r$ for some function $\mu$. From this we derive
	\begin{align*}
	\laplacian \theta=\mu \laplacian r+\grad \mu \cdot \grad r,
	\end{align*}
and we compute
	\begin{align*}
	\laplacian A+A&=\left(\laplacian r+r-r|\grad \theta|^2+2i\grad r \cdot \grad \theta+ir\laplacian \theta\right)e^{i\theta} \nonumber \\
		&=\left(\laplacian r+r-r\mu^2|\grad r|^2+2i\mu |\grad r|^2+ir\mu \laplacian r+ir\grad r \cdot \grad \mu\right)e^{i\theta}.
	\end{align*}
Thus,
	\begin{align}
	&(\laplacian A+A)\bar{A_x} \nonumber \\
	&=\left(\laplacian r+r-r\mu^2|\grad r|^2+2i\mu |\grad r|^2+ir\mu \laplacian r+ir\grad r \cdot \grad \mu\right)\left(r_x-ir\theta_x\right) \nonumber \\
	&=\left(\laplacian r+(r+r^3\mu^2-r\mu^2)|\grad r|^2+2i\mu |\grad r|^2+ir\mu \laplacian r+ir\grad r \cdot \grad \mu\right)\left(1-ir\mu\right)r_x, \nonumber
	\end{align}
where we used $1=\left(1+r^2\mu^2\right)|\grad r|^2$ and $\theta_x=\mu r_x$ in the last equality. 

\noindent Taking the imaginary part, we deduce from (\ref{(3)}) in Proposition \ref{PLE} that
	\begin{align}\label{FG1}
	&\mbox{Im}\left((\laplacian A+A)\bar{A_x}\right)=0 \nonumber \\
		&\Leftrightarrow \left(r\grad r \cdot \grad \mu+\left(2\mu-r^2\mu+r^2\mu^3-r^4\mu^3\right)|\grad r|^2\right)r_x=0.
	\end{align}
By a similar computation, we also have
	\begin{align}\label{FG2}
	&\mbox{Im}\left((\laplacian A+A)\bar{A_y}\right)=0 \nonumber \\
		&\Leftrightarrow \left(r\grad r \cdot \grad \mu+\left(2\mu-r^2\mu+r^2\mu^3-r^4\mu^3\right)|\grad r|^2\right)r_y=0.
	\end{align}
Since $|\grad r|\neq 0$, (\ref{FG1}) and (\ref{FG2}) imply that
	\begin{align}\label{FG3}
	r\grad r \cdot \grad \mu+\left(2\mu-r^2\mu+r^2\mu^3-r^4\mu^3\right)|\grad r|^2=0.
	\end{align}
On the other hand, $\theta_{xy}=\mu_yr_x+\mu r_{xy}=\mu_xr_y+\mu r_{yx}=\theta_{yx}$ implies
	\begin{align}
	\mu_yr_x-\mu_xr_y=0. \nonumber 
	\end{align}
Then $\grad \mu$ and $\grad r$ are linearly dependent so that (\ref{FG3}) is equivalent to
	\begin{align}\label{zero}
	r\grad \mu+\left(2\mu-r^2\mu+r^2\mu^3-r^4\mu^3\right)\grad r=0.
	\end{align}
Now we use (\ref{zero}) to compute
	\begin{align}
	&\grad \left(\frac{r^2\mu}{e^{\frac{1}{2}r^2}\sqrt{1+r^2\mu^2}}\right)\nonumber \\
	&=\frac{e^{\frac{1}{2}r^2}\sqrt{1+r^2\mu^2}\left(2r\mu\grad r+r^2\grad \mu\right)-r^2\mu\left(re^{\frac{1}{2}r^2}\sqrt{1+r^2\mu^2}\grad r+e^{\frac{1}{2}r^2}\frac{r^2\mu \grad \mu+r\mu^2 \grad r}{\sqrt{1+r^2\mu^2}}\right)}{e^{r^2}\left(1+r^2\mu^2\right)} \nonumber \\
	&=\frac{\left(1+r^2\mu^2\right)\left(2r\mu\grad r+r^2\grad \mu\right)-r^3\mu\left(1+r^2\mu^2\right)\grad r-r^4\mu^2 \grad \mu-r^3\mu^3 \grad r}{e^{\frac{1}{2}r^2}\left(1+r^2\mu^2\right)^{\frac{3}{2}}} \nonumber \\
	&=\frac{r}{e^{\frac{1}{2}r^2}\left(1+r^2\mu^2\right)^{\frac{3}{2}}}\left(r\grad \mu+\left(2\mu-r^2\mu+r^2\mu^3-r^4\mu^3\right)\grad r\right) \nonumber \\
	&=0. \nonumber
	\end{align}
Therefore we conclude that
	\begin{align*}
	\left(\frac{r^2\mu}{e^{\frac{1}{2}r^2}\sqrt{1+r^2\mu^2}}\right)^2=\frac{r^4\mu^2}{e^{r^2}\left(1+r^2\mu^2\right)}=\frac{r^4|\grad \theta|^2}{e^{r^2}}
	\end{align*}
is a constant. 

For the case $|\grad \theta|\neq 0$, there exists a function $\eta$ such that $\grad r=\eta \grad \theta$. Then, a similar argument yields
	\begin{align}
	\grad \left(\frac{r^2}{e^{\frac{1}{2}r^2}\sqrt{r^2+\eta^2}}\right)=0, \nonumber
	\end{align}
which again implies that
	\begin{align*}
	\left(\frac{r^2}{e^{\frac{1}{2}r^2}\sqrt{r^2+\eta^2}}\right)^2=\frac{r^4}{e^{r^2}(r^2+\eta^2)}=\frac{r^4|\grad \theta|^2}{e^{r^2}}
	\end{align*}
is a constant.
\end{proof}

Next, we prove that the constant in the previous lemma cannot be zero.

\begin{lem}\label{CP}
For $p \in \Sigma^2$ with $|A|>0$, let $r^4|\grad \theta|^2=Ce^{r^2}$ at $p$. Then $C>0$.
\end{lem}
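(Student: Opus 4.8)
The plan is to argue by contradiction, exploiting that $\Sigma^2$ is closed. Since $r^4|\grad\theta|^2\ge 0$ and $e^{r^2}>0$, the constant $C$ produced by Lemma \ref{LC} is automatically nonnegative, so the whole content of the statement is to rule out $C=0$. I would therefore suppose $C=0$ and aim to contradict compactness.

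First I would fix the relevant domain. Set $U\coloneqq\{q\in\Sigma^2 : |A(q)|>0\}$ and let $V$ be the connected component of $U$ containing $p$. By Lemma \ref{LC} the relation $r^4|\grad\theta|^2=Ce^{r^2}$ holds on all of $V$ with the \emph{same} constant $C$. If $C=0$, then $r^4|\grad\theta|^2\equiv 0$ on $V$; as $r=|A|>0$ there, this forces $\grad\theta\equiv 0$, so $\theta$ is constant on the connected set $V$.

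The key step is then the normalization $|\grad r|^2+r^2|\grad\theta|^2=1$ already used in the proof of Lemma \ref{LC} (coming from $|A_x|^2+|A_y|^2=\lambda$ together with $A=re^{i\theta}$). With $\grad\theta\equiv 0$ this collapses to the eikonal identity $|\grad r|\equiv 1$ on $V$, so $r$ has \emph{no} critical point in $V$. To finish, I would run a maximum argument on $\bar V$: since $V$ is a component of the open set $U$, its boundary avoids $U$, i.e. $\partial V\subseteq\{|A|=0\}$, so $r\equiv 0$ on $\partial V$ while $r>0$ on $V$. The closure $\bar V$ is compact, hence $r$ attains a positive maximum on $\bar V$; this maximum cannot be located on $\partial V$, so it is attained at an interior point $q^*\in V$ where $\grad r(q^*)=0$, contradicting $|\grad r|\equiv 1$. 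Thus $C\neq 0$, and being nonnegative, $C>0$.

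I expect the only delicate point to be passing from the local facts ($\theta$ constant, $|\grad r|=1$) to a genuinely global contradiction, which rests on the boundary behavior $r|_{\partial V}=0$ that pins the maximum in the interior. Geometrically, $\theta\equiv$ const means the first $\C$-factor of $F$ traces out a single ray, so $F(V)$ lies in a hyperplane of $\R^4$; one could instead derive the contradiction from the fact that the resulting fixed normal direction yields, via $J$, a nowhere-vanishing constant tangent field on the compact $\Sigma^2$. The eikonal route seems cleaner and sidesteps this alternative.
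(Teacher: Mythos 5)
Your proposal is correct, and its skeleton matches the paper's proof exactly: pass to the connected component of $\{|A|>0\}$ containing $p$, use Lemma \ref{LC} plus connectedness to propagate the single constant $C$ across that component, note that $C=0$ together with $r>0$ forces $\grad\theta\equiv 0$, and then invoke the identity $|\grad r|^2+r^2|\grad\theta|^2=1$ to get the eikonal equation $|\grad r|\equiv 1$. Where you diverge is the finishing move. The paper follows an integral curve of $\grad r$: along it $r$ grows linearly in time, so the curve cannot stay in the component forever (since $r$ is bounded on the closed surface), yet it also cannot reach the boundary of the component, where $r$ must vanish by continuity. Your version instead takes the maximum of $r$ on the compact closure $\bar V$: since $r\equiv 0$ on $\partial V\subseteq\{|A|=0\}$ and $r>0$ in $V$, the maximum is attained at an interior point, where $\grad r=0$, contradicting $|\grad r|\equiv 1$. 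Your route is a bit more elementary and robust: it needs only continuity of $r$ on $\bar V$ and smoothness inside $V$, and it sidesteps the flow-existence question that the paper's argument glosses over (one must justify that the integral curve either exists for all time inside $\Omega$ or limits to a boundary point). It also handles the case $\partial V=\emptyset$ (i.e.\ $V=\Sigma^2$) with no extra words, since then every point is interior. Both arguments ultimately rest on the same tension between $|\grad r|\equiv 1$ and compactness; yours just extracts the contradiction pointwise rather than dynamically.
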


\begin{proof}
Suppose that there exists a point $p \in \Sigma^2$ such that $|A|>0$ and $r^4|\grad \theta|^2=0$ at $p$. Let $\Omega$ be the connected component of $\Sigma^2 \cap \{|A|>0\}$ which contains $p$.
Then Lemma \ref{LC} implies that $\Omega$ is an open set and locally every point shares the same constant in the above lemma. From the connectedness of $\Omega$, we conclude that all points of $\Omega$ share the same constant. Hence $r^4|\grad \theta|^2=0$ in $\Omega$.

Since $r=|A|>0$ in $\Omega$, we have $|\grad \theta|=0$ in $\Omega$. Let $\gamma=\gamma(t)$ be the integral curve of $\grad r$ with $\gamma(0)=p$. Then
	\begin{align}
	r(\gamma(T))-r(\gamma(0))&=\int_0^T\frac{\d}{\d t}r(\gamma(t))\d t \nonumber \\
		&=\int_0^T\grad r(\gamma(t))\cdot \dot{\gamma}(t)\d t \nonumber \\
		&=\int_0^T|\grad r|^2\d t \ =\ T,\nonumber
	\end{align} 
where we used $|\grad r|^2=|\grad r|^2+r^2|\grad \theta|^2=1$ in the last equality.

If the integral curve stays inside $\Omega$, then $r$ increases along the curve by the amount of $T$ increases. This is impossible since $r$ is bounded on $\Sigma^2$. Thus, we may deduce that $\gamma(t)$ approaches the boundary point of $\Omega$. However, by the definition of $\Omega$, $r$ should vanish at the boundary point. This is also a contradiction as $r$ is continuous and increases along $\gamma$. Therefore, such $p\in \Sigma^2$ does not exist.
\end{proof}

Finally, we obtain the following proposition.

\begin{prop}[Global Relations]\label{GC}
Let $F : \Sigma^2 \to \C^2$ be a closed Lagrangian self-shrinker in $\mathscr{F}$, given by $F=(A, B)$. Then, $|A|>0$ and $|B|>0$ on $\Sigma^2$. Moreover, there exist constants $C_1,\ C_2>0$ such that
	\begin{align}
	r_1^4|\grad \theta_1|^2=C_1e^{r_1^2},\ r_2^4|\grad \theta_2|^2=C_2e^{r_2^2}
	\end{align}
on all of $\Sigma^2$, where $A=r_1e^{i\theta_1}$ and $B=r_2e^{i\theta_2}$ are polar representations.
\end{prop}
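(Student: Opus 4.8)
The plan is to reduce the entire proposition to the single statement that $|A|$ (and, symmetrically, $|B|$) never vanishes on $\Sigma^2$. Indeed, once we know $|A|>0$ on all of $\Sigma^2$, the polar representation $A=r_1e^{i\theta_1}$ is globally defined, Lemma \ref{LC} gives the relation $r_1^4|\grad\theta_1|^2=Ce^{r_1^2}$ with a \emph{locally} constant $C$, and the connectedness of $\Sigma^2$ upgrades this to a single global constant $C_1$; Lemma \ref{CP} then forces $C_1>0$. The same reasoning applied to $B$ (the roles of $A$ and $B$ are interchangeable in Proposition \ref{PLE} via $|A_x|=|B_y|$, $|A_y|=|B_x|$) yields $C_2>0$. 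So the whole proposition rests on proving positivity, and this is the step I expect to be the crux.

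To prove $|A|>0$ everywhere, I would first rule out $A\equiv 0$: if $A$ vanished identically then $F$ would map the closed surface $\Sigma^2$ into the $2$-plane $\{0\}\times\C$, which is impossible for an immersion of a compact boundaryless surface, since the image would be simultaneously open and compact in $\R^2$. Hence $U\coloneqq\{|A|>0\}$ is a nonempty open subset of $\Sigma^2$. Fix a connected component $\Omega$ of $U$. Exactly as in the proof of Lemma \ref{CP}, Lemma \ref{LC} shows that the local constant is locally constant and therefore constant on the connected set $\Omega$, so there is a single $C_\Omega$ with $r_1^4|\grad\theta_1|^2=C_\Omega e^{r_1^2}$ on $\Omega$, and $C_\Omega>0$ by Lemma \ref{CP}.

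The key observation is then that this relation, combined with the normalization $|\grad r_1|^2+r_1^2|\grad\theta_1|^2=1$ established in the proof of Lemma \ref{LC}, keeps $|A|$ uniformly away from $0$ on $\Omega$. Indeed $r_1^2|\grad\theta_1|^2\le 1$ gives $C_\Omega e^{r_1^2}\le r_1^2$, and since $e^{r_1^2}\ge 1$ we obtain
\[
r_1^2\;\ge\;C_\Omega e^{r_1^2}\;\ge\;C_\Omega,\qquad\text{i.e.}\qquad |A|\ge\sqrt{C_\Omega}>0\ \text{ on }\Omega.
\]
Consequently, for any $q$ in the closure $\overline{\Omega}$ we still have $|A(q)|\ge\sqrt{C_\Omega}>0$ by continuity, so $q\in U$; since $U$ is open and $\Omega$ is a connected component of $U$, the point $q$ must already lie in $\Omega$. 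Hence $\Omega$ is closed as well as open, and the connectedness of $\Sigma^2$ forces $\Omega=\Sigma^2$. Therefore $|A|>0$ on $\Sigma^2$, the global relation for $A$ follows as described in the first paragraph, and the argument for $B$ is identical.

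The main obstacle is precisely this positivity: a priori $A$ could have zeros, at which $\theta_1$ is undefined and $|\grad\theta_1|$ could blow up. What makes the argument work is the interplay between the lower bound $C_\Omega>0$ of Lemma \ref{CP} and the universal upper bound $r_1^2|\grad\theta_1|^2\le1$ coming from the immersion and isothermal normalization: a genuine zero of $A$ would force $r_1^2|\grad\theta_1|^2=C_\Omega e^{r_1^2}/r_1^2\to\infty$, contradicting the bound. Thus the essential content is the positivity $C_\Omega>0$, which converts the qualitative local relation of Lemma \ref{LC} into the quantitative lower bound $|A|\ge\sqrt{C_\Omega}$, and the only delicate point is to package this blow-up obstruction as the clopen argument above.
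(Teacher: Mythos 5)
Your proof is correct and follows essentially the same route as the paper: rule out $A\equiv 0$, invoke Lemmas \ref{LC} and \ref{CP} to get a positive, locally constant $C$, and run a clopen/connectedness argument whose crux is that the relation $r_1^4|\grad\theta_1|^2=Ce^{r_1^2}$ together with $|\grad r_1|^2+r_1^2|\grad\theta_1|^2=1$ keeps $r_1$ uniformly away from zero, so $|A|$ cannot vanish on the boundary of the region where the relation holds. The only differences are cosmetic: you organize the argument around connected components of $\{|A|>0\}$ rather than the paper's maximal set sharing a fixed constant, and you extract the simpler lower bound $r_1\geq\sqrt{C_\Omega}$ (via $e^{r_1^2}\geq 1$) where the paper appeals to the smaller root $r_{min}(C_1)$ of the transcendental equation $r^2=C_1e^{r^2}$.
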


\begin{proof}
If $|A|$ vanishes at every point of $\Sigma^2$, then $\Sigma^2$ should be contained in a $2$-plane. It is impossible, so there exists a point $p \in \Sigma^2$ with $|A|>0$. Then, by Lemma \ref{LC} and \ref{CP}, there exists a positive constant $C_1$ such that $r_1^4|\grad \theta_1|^2=C_1e^{r_1^2}$ in a neighborhood of $p$. 

Let $U$ be the maximal subset of $\Sigma^2$ consisting of points which share the same constant $C_1$, i.e., $r_1^4|\grad \theta_1|^2=C_1e^{r_1^2}$ in $U$. Since $p\in U$ by the definition of $C_1$, $U$ is non-empty. Moreover, by Lemma \ref{LC}, there exists a neighborhood for each point $q\in U$ such that all points in the neighborhood share the same constant. Thus, $U$ is an open set.

Next, we prove that $U$ is also a closed subset by proving $\partial U=\emptyset$. Suppose the contrary. That is, assume that $x\in \partial U$. If $r_1(x)>0$, then by Lemma \ref{LC} and \ref{CP}, there exists a neighborhood of $x$, $V$, and a positive constant $C_0$ such that $r_1^4|\grad \theta_1|^2=C_0e^{r_1^2}$ in $V$. By the definition of the boundary point, we have $V\cap U \neq \emptyset$ and $V\cap U^c\neq \emptyset$. From $V\cap U \neq \emptyset$, we deduce that $C_0=C_1$. This implies that $V\subseteq U$ and $V\cap U^c=\emptyset$, which is a contradiction. Thus, $r_1(x)=0$. 

On the other hand, from $|\grad r_1|^2+r_1^2|\grad \theta_1|^2=1$ we derive
	\begin{align*}
	|\grad \frac{1}{2}r_1^2|^2=r_1^2-C_1e^{r_1^2}\geq 0
	\end{align*}
in $U$. The last inequality holds, if and only if
	\begin{align*}
	C_1\leq \frac{1}{e}
	\end{align*}
and
	\begin{align*}
	r_{min}(C_1)\leq r_1 \leq r_{max}(C_1),
	\end{align*}
where $r_{min}(C_1)$ and $r_{max}(C_1)$ are two solutions of $r^2=C_1e^{r^2}$.

By the continuity of $r_1$, we conclude that $0=r_1(x)\geq r_{min}(C_1)>0$, which is a contradiction. Therefore $\partial U=\emptyset$ and $U$ is closed. 

Since $U$ is a non-empty open and closed subset in the connected surface, $U$ must be equal to $\Sigma^2$ and we deduce that $r_1^4|\grad \theta_1|^2=C_1e^{r_1^2}$ on all of $\Sigma^2$. The same method can be applied to $B$ and we finish the proof.
\end{proof}

\begin{rmk}
\emph{Although $\theta_1$ and $\theta_2$ are defined up to multiples of $2\pi$, $\grad \theta_1$ and $\grad \theta_2$ are well-defined on $\Sigma^2$.}
\end{rmk}

Next, we compute explicit constants on products of Abresch-Langer curves. All self-shrinkers in this family have reflection symmetry so that Proposition \ref{GC} can be applied. We observe that each constant coincides with the one that appeared in Lemma \ref{TR}. The precise computation would be done as follows.

Let $\Gamma_1$ and $\Gamma_2$ be the Abresch-Langer curves associated with the constants $c_{\Gamma_1}$ and $c_{\Gamma_2}$ as in Lemma \ref{TR}. Suppose $\Gamma_1$ and $\Gamma_2$ are given by $\gamma_1(s)$ and $\gamma_2(t)$, respectively, where $s$ and $t$ are arc-length parametrizations. Consider the Lagrangian self-shrinker $\Gamma_1 \times \Gamma_2 \subset \R^2\times\R^2=\R^4$. If we write $\gamma_1(s)=r_1(s)e^{i\theta_1(s)}$, then
	\begin{align}\label{ap}
	|\dot{\gamma_1}|^2=\dot{r_1}^2+r_1^2\dot{\theta_1}^2=1,
	\end{align}
where the upper dot denotes the derivative with respect to $s$. As $\Gamma_1 \times \Gamma_2\subset \R^4$ is given by the product, we have
	\begin{align*}
	|\grad \theta_1|^2=\dot{\theta_1}^2
	\end{align*}
so that 
	\begin{align}\label{Const}
	C_1\coloneqq \frac{r_1^4|\grad \theta_1|^2}{e^{r_1^2}} =\frac{r_1^4\dot{\theta_1}^2}{e^{r_1^2}}=\frac{r_1^2(1-\dot{r_1}^2)}{e^{r_1^2}},
	\end{align}
where we have used (\ref{ap}) in the last step.
A direct computation gives
	\begin{align*}
	\gamma_1^{\perp}=\gamma_1-\mbox{Re}(\gamma_1\bar{\dot{\gamma_1}})\dot{\gamma_1}=\big{(}r_1(1-\dot{r_1}^2)-ir_1^2\dot{r_1}\dot{\theta_1}\big{)}e^{i\theta_1},
	\end{align*}
and then with (\ref{ap}), we obtain
	\begin{align}\label{curva}
	k_{\Gamma_1}^2=|\ddot{\gamma_1}|^2=|\gamma_1^{\perp}|^2=r_1^2(1-\dot{r_1}^2)^2+r_1^4\dot{r_1}^2\dot{\theta_1}^2=r_1^2(1-\dot{r_1}^2).
	\end{align}
We deduce from (\ref{Const}) and (\ref{curva}) that
	\begin{align*}
	C_1=\frac{r_1^2(1-\dot{r_1}^2)}{e^{r_1^2}}=\frac{k_{\Gamma_1}^2}{e^{r_1^2}}=c_{\Gamma_1}^2,
	\end{align*}
and similarly $C_2=c_{\Gamma_2}^2$.

\section{Proof of the Main Results}
\setcounter{equation}{0}
In this section, we prove the main results of this paper. 

\begin{proof}[Proof of Theorem \ref{MTHM}]
We may assume that $F\in \mathscr{F}$. Let $p \in \Sigma^2$ and suppose that the immersion is given by $F(x, y)=(A(x ,y), B(x, y))$, where $x$ and $y$ are isothermal coordinates near $p$. By Propostion \ref{GC}, $|A|$ and $|B|$ never vanish on $\Sigma^2$ and we may consider polar representations in a neighborhood of $p$ as follows:
	\begin{align*}
	A(x, y)=r_1(x, y)e^{i\theta_1(x, y)},\ B(x, y)=r_2(x, y)e^{i\theta_2(x, y)}.
	\end{align*}
Again by Proposition \ref{GC}, we know that $|\grad \theta_1|>0$ and $|\grad \theta_2|>0$.

Then, as in the proof of Lemma \ref{LC}, there exist functions $\eta_1$ and $\eta_2$ such that
	\begin{align}\label{lind}
	\grad r_1=\eta_1\grad \theta_1,\ \grad r_2=\eta_2 \grad \theta_2.
	\end{align}
From (\ref{(1)}) and (\ref{(2)}) in Proposition \ref{PLE}, we derive
	\begin{align}
	|A_x|=|B_y| &\Leftrightarrow (\eta_1^2+r_1^2)\theta_{1x}^2=(\eta_2^2+r_2^2)\theta_{2y}^2, \nonumber \\
	|A_y|=|B_x| &\Leftrightarrow (\eta_1^2+r_1^2)\theta_{1y}^2=(\eta_2^2+r_2^2)\theta_{2x}^2, \nonumber \\
	A_x\bar{A_y}+B_x\bar{B_y}=0 &\Leftrightarrow (\eta_1^2+r_1^2)\theta_{1x}\theta_{1y}+(\eta_2^2+r_2^2)\theta_{2x}\theta_{2y}=0, \nonumber   
	\end{align}
which gives
	\begin{align*}
	\theta_{1x}\theta_{2x}+\theta_{1y}\theta_{2y}=0.
	\end{align*}
Therefore we conclude that
	\begin{align}\label{ortho}
	\grad \theta_1 \cdot \grad \theta_2=0.
	\end{align}

Since $\grad \theta_1$ and $\grad \theta_2$ are non-vanishing and orthogonal, $\theta_1$ and $\theta_2$ give rise to local coordinates near $p$. In terms of $\theta_1$ and $\theta_2$, we may write $A=r_1(\theta_1, \theta_2)e^{i\theta_1}$. Then, by (\ref{lind}) and (\ref{ortho}),
	\begin{align*}
	\grad r_1 \cdot \grad \theta_2=\eta_1\grad \theta_1 \cdot \grad \theta_2=0.
	\end{align*}
This proves that $r_1$ is independent of $\theta_2$. Similarly
	\begin{align*}
	\grad r_2 \cdot \grad \theta_1=\eta_2 \grad\theta_2 \cdot \grad\theta_1=0,
	\end{align*}
and $r_2$ is independent of $\theta_1$.

Therefore we proved that $\Sigma^2$ is locally given by the product of two curves, parametrized by $\theta_1$ and $\theta_2$, respectively, in a neighborhood of $p$. This implies that $\Sigma^2$ is flat at $p$. Since $p$ was arbitrary, we conclude that $\Sigma^2$ is flat. Then, by the result of Li and Wang (see Proposition \ref{Flat}), $\Sigma^2$ is the product of two Abresch-Langer curves. 
\end{proof}

The unit circle is the only embedded curve in the examples of Abresch and Langer. Therefore we can characterize the Clifford torus as the unique embedded self-shrinker in the products of Abresch-Langer curves, and Theorem \ref{COR} follows directly from Theorem \ref{MTHM}.

\bibliographystyle{abbrv}
\bibliography{CLSSRSREF.bib}

\begin{thebibliography}{10}

\bibitem{AL}
U.~Abresch and J.~Langer.
\newblock The normalized curve shortening flow and homothetic solutions.
\newblock {\em J. Differ. Geom.}, 23(2):175--196, 1986.

\bibitem{Anciaux}
H.~Anciaux.
\newblock Construction of {L}agrangian self-similar solutions to the mean
  curvature flow in $\mathbb{C}^n$.
\newblock {\em Geom. Dedic.}, 120(1):37--48, 2006.

\bibitem{Brendle}
S.~Brendle.
\newblock Embedded minimal tori in ${S}^3$ and the {L}awson conjecture.
\newblock {\em Acta Math.}, 211(2):177--190, 2013.

\bibitem{CLHminimal}
I.~Castro and A.~M. Lerma.
\newblock Hamiltonian stationary self-similar solutions for {L}agrangian mean
  curvature flow in the complex {E}uclidean plane.
\newblock {\em Proc. Am. Math. Soc.}, 138(5):1821--1832, 2010.

\bibitem{CLLag}
I.~Castro and A.~M. Lerma.
\newblock The {C}lifford torus as a self-shrinker for the {L}agrangian mean
  curvature flow.
\newblock {\em Int. Math. Res. Not.}, (6):1515--1527, 2014.

\bibitem{LWG}
D.~Joyce, Y.-I. Lee, and M.-P. Tsui.
\newblock Self-similar solutions and translating solitons for {L}agrangian mean
  curvature flow.
\newblock {\em J. Differ. Geom.}, 84(1):127--161, 2010.

\bibitem{LWF}
Y.-I. Lee and M.-T. Wang.
\newblock Hamiltonian stationary cones and self-similar solutions in higher
  dimension.
\newblock {\em Trans. Am. Math. Soc.}, 362(3):1491--1503, 2010.

\bibitem{LiWang}
H.~Li and X.~Wang.
\newblock New characterizations of the {C}lifford torus as a {L}agrangian
  self-shrinker.
\newblock {\em J. Geom. Anal.}, 27(2):1393--1412, 2017.

\bibitem{Neves}
A.~Neves.
\newblock Finite time singularities for {L}agrangian mean curvature flow.
\newblock {\em Ann. Math.}, 177(3):1029--1076, 2013.

\bibitem{Ros}
A.~Ros.
\newblock A two-piece property for compact minimal surfaces in a three-sphere.
\newblock {\em Indiana Univ. Math. J.}, 44(3):841--849, 1995.

\bibitem{SmoLag}
K.~Smoczyk.
\newblock A canonical way to deform a {L}agrangian submanifold.
\newblock {\em arXiv:dg-ga/9605005}, 1996.

\bibitem{SmoSph}
K.~Smoczyk.
\newblock The {L}agrangian mean curvature flow.
\newblock {\em Univ. Leipzig (Habil. -Schr.)}, 2000.

\bibitem{SmoAC}
K.~Smoczyk.
\newblock Self-shrinkers of the mean curvature flow in arbitrary codimension.
\newblock {\em Int. Math. Res. Not.}, (48):2983--3004, 2005.

\end{thebibliography}

\end{document}